\documentclass[reqno]{amsart}
\usepackage{hyperref}
\usepackage{amssymb}

\allowdisplaybreaks

\theoremstyle{plain}
\newtheorem{theorem}{Theorem}[section]

\newtheorem{corollary}[theorem]{Corollary}

\theoremstyle{definition}
\newtheorem{definition}[theorem]{Definition}

\theoremstyle{remark}
\newtheorem{remark}[theorem]{Remark}
\numberwithin{equation}{section}

\begin{document}

\title[Parabolic equation with nonlinear nonlocal boundary condition]
{Initial boundary value problem for a semilinear parabolic
equation with nonlinear nonlocal boundary condition}

\author[A. Gladkov]{Alexander Gladkov}
\address{Alexander Gladkov \\ Department of Mechanics and Mathematics
\\ Belarusian State University \\ Nezavisimosti Avenue 4, 220030
Minsk, Belarus} \email{gladkoval@mail.ru}

\author[T. Kavitova]{Tatiana Kavitova}
\address{Tatiana Kavitova \\ Department of Mathematics \\ Vitebsk State University \\ Moskovskii pr. 33, 210038
Vitebsk, Belarus} \email{kavitovatv@tut.by}

\subjclass{Primary 35K20, 35K58, 35K61}
\keywords{semilinear
parabolic equation, nonlocal boundary condition, local solution,
uniqueness}

\begin{abstract}
In this paper we consider an initial boundary value problem for a
semilinear parabolic equation with nonlinear nonlocal boundary
condition. We prove comparison principle, the existence theorem of
a local solution and  study the problem of uniqueness and
nonuniqueness.
\end{abstract}

\maketitle

\section{Introduction}

In this paper we consider the initial boundary value problem for
the following semilinear parabolic equation
\begin{equation}\label{v:u}
    u_t=\Delta u+c(x,t)u^p,\;x\in\Omega,\;t>0,
\end{equation}
with nonlinear nonlocal boundary condition
\begin{equation}\label{v:g}
\frac{\partial u(x,t)}{\partial\nu}=\int_{\Omega}{k(x,y,t)u^l(y,t)}\,dy,\;x\in\partial\Omega,\;t>0,
\end{equation}
and initial datum
\begin{equation}\label{v:n}
    u(x,0)=u_{0}(x),\; x\in\Omega,
\end{equation}
where $p>0,\,l>0$, $\Omega$ is a bounded domain in $\mathbb{R}^n$
for $n\geq1$ with smooth boundary $\partial\Omega$, $\nu$ is unit
outward normal on $\partial\Omega.$

Throughout this paper we suppose that the functions
$c(x,t),\;k(x,y,t)$ and $u_0(x)$ satisfy the following conditions:
\begin{equation*}
c(x,t)\in
C^{\alpha}_{loc}(\overline{\Omega}\times[0,+\infty)),\;0<\alpha<1,\;c(x,t)\geq0;
\end{equation*}
\begin{equation*}
k(x, y, t)\in
C(\partial\Omega\times\overline{\Omega}\times[0,+\infty)),\;k(x,y,t)\geq0;
\end{equation*}
\begin{equation*}
u_0(x)\in C^1(\overline{\Omega}),\;u_0(x)\geq0\textrm{ in
}\Omega,\;\frac{\partial u_0(x)}{\partial\nu}=\int_{\Omega}{k(x,
y,0)u_0^l(y)}\,dy\textrm{ on }\partial\Omega.
\end{equation*}
A lot of articles have been devoted to the investigation of
initial boundary value problems for parabolic equations and
systems of parabolic equations with nonlinear nonlocal Dirichlet
boundary condition (see, for example,~\cite{Deng, Gladkov_Guedda,
Gladkov_Guedda2, Gladkov_Kim, Gladkov_Kim1, Gladkov_Nikitin, Liu,
Pao, Yin} and the references therein). In particular, the initial
boundary value problem for equation~(\ref{v:u}) with nonlocal
boundary condition
\begin{equation*}
    u(x,t)=\int_{\Omega}k(x,y,t)u^l(y,t)\,dy,\;x\in\partial\Omega,\;t>0,
\end{equation*}
was considered for $c(x,t)\leq0$ and $c(x,t)\geq0$
in~\cite{Gladkov_Guedda, Gladkov_Guedda2} and~\cite{Gladkov_Kim,
Gladkov_Kim1} respectively.

We note that for $p<1$ and $l<1$ the nonlinearities in
equation~(\ref{v:u}) and boundary condition~(\ref{v:g}) are
non-Lipschitzian. The problem of uniqueness and nonuniqueness for
different parabolic nonlinear equations with non-Lipschitzian data
in bounded domain has been addressed by several authors (see, for
example, \cite{B, Cortazar, CER, Escobedo_Herrero, FW,
Gladkov_Guedda2, Gladkov_Kim1, K} and the references therein).

The aim of this paper is to study problem~(\ref{v:u})--(\ref{v:n}) for any $p>0$ and $l>0.$ We prove existence of a local solution and
establish some uniqueness and nonuniqueness results.

This paper is organized as follows. In the next section we prove
comparison principle. The existence theorem of a local solution is
established in section~\ref{v}. The problem of uniqueness and
nonuniqueness for~(\ref{v:u})--(\ref{v:n}) is investigated in
section~\ref{uniq}.
\section{ Comparison principle}

In this section the theorem of positiveness of solution and
comparison principle for~(\ref{v:u})--(\ref{v:n}) will be proved.
We begin with definitions of supersolution, subsolution and
maximal solution of~(\ref{v:u})--(\ref{v:n}).

Let be $Q_T=\Omega\times(0,T),\;S_T=\partial\Omega\times(0,T)$, $\Gamma_T=S_T\cup\overline\Omega\times\{0\}$, $T>0$.
\begin{definition}\label{v:sup}
    We say that a nonnegative function $u(x,t)\in C^{2,1}(Q_T)\cap C^{1,0}(Q_T\cup\Gamma_T)$
    is a supersolution of~(\ref{v:u})--(\ref{v:n}) in $Q_{T}$ if
        \begin{equation}\label{v:sup^u}
        u_{t}\geq\Delta u+c(x, t)u^{p},\;(x,t)\in Q_T,
        \end{equation}
        \begin{equation}\label{v:sup^g}
        \frac{\partial u(x,t)}{\partial\nu}\geq\int_{\Omega}{k(x, y, t)u^l(y, t) }\,dy,\;(x,t)\in S_T,
        \end{equation}
        \begin{equation}\label{v:sup^n}
            u(x,0)\geq u_{0}(x),\; x\in\Omega,
        \end{equation}
     and $u(x,t)\in C^{2,1}(Q_T)\cap C^{1,0}(Q_T\cup\Gamma_T)$ is a subsolution of~(\ref{v:u})--(\ref{v:n}) in $Q_{T}$ if $u\geq0$ and it satisfies~(\ref{v:sup^u})--(\ref{v:sup^n}) in the reverse order. We say that $u(x,t)$ is a solution of problem~(\ref{v:u})--(\ref{v:n}) in $Q_T$ if $u(x,t)$ is both a subsolution and a supersolution of~(\ref{v:u})--(\ref{v:n}) in $Q_{T}$.
\end{definition}
\begin{definition}\label{v:max1}
     We say that a solution $u(x,t)$ of~(\ref{v:u})--(\ref{v:n}) in $Q_{T}$ is maximal solution if for any other solution $v(x,t)$ of~(\ref{v:u})--(\ref{v:n}) in $Q_{T}$ the inequality $v(x,t)\leq u(x,t)$ is satisfied for $(x,t)\in Q_T\cup\Gamma_T$.
\end{definition}
\begin{theorem}\label{p:theorem:positive}
   Suppose that $u_0\not\equiv0$ in $\Omega$ and $u(x,t)$ is a solution of~(\ref{v:u})--(\ref{v:n}) in $Q_{T}$. Then $u(x,t)>0$ in $Q_T\cup S_T$.
\end{theorem}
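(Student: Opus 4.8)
The plan is to prove positivity of a solution $u(x,t)$ to the problem by applying the strong maximum principle and Hopf's boundary point lemma to a linearized parabolic operator. Since $u \geq 0$ is given, the main task is to rule out the possibility that $u$ vanishes somewhere in $Q_T \cup S_T$.

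First I would observe that $u$ satisfies the linear parabolic inequality
\begin{equation*}
u_t - \Delta u = c(x,t)u^p \geq 0 \quad \text{in } Q_T,
\end{equation*}
because $c \geq 0$ and $u \geq 0$. To put this in a form where the standard maximum principle applies, I would rewrite the right-hand side as $c(x,t)u^{p-1}\cdot u$ where $u>0$, or more safely introduce a bounded coefficient $\tilde{c}(x,t)$ so that the equation reads $u_t - \Delta u - \tilde{c}(x,t)u = 0$ with $\tilde{c}$ controlled on the relevant region; this is the standard device for handling the nonlinear absorption/source term when examining positivity. Thus $u$ is a nonnegative supersolution of a linear uniformly parabolic operator with a zero-order term. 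By the strong maximum principle, if $u$ attains the value $0$ at some interior point $(x_0,t_0)\in Q_T$, then $u \equiv 0$ on $\overline{\Omega}\times[0,t_0]$, which in particular forces $u_0 \equiv 0$ in $\Omega$, contradicting the hypothesis $u_0 \not\equiv 0$. Hence $u>0$ throughout $Q_T$.

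Next I would handle the boundary $S_T$. Suppose $u(x_0,t_0)=0$ for some $(x_0,t_0)\in S_T$. Since $u>0$ in the interior and $u(x_0,t_0)=0$ is a minimum attained on the boundary, Hopf's boundary point lemma (the parabolic version) gives the strict sign
\begin{equation*}
\frac{\partial u(x_0,t_0)}{\partial \nu} < 0.
\end{equation*}
On the other hand, the boundary condition together with $k \geq 0$ and $u \geq 0$ yields
\begin{equation*}
\frac{\partial u(x_0,t_0)}{\partial \nu} = \int_{\Omega} k(x_0,y,t_0)u^l(y,t_0)\,dy \geq 0,
\end{equation*}
which is a contradiction. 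Therefore $u>0$ on $S_T$ as well, completing the proof.

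The step I expect to be the main obstacle is the rigorous application of the strong maximum principle and Hopf's lemma to the non-Lipschitzian source $c(x,t)u^p$ when $p<1$: one must verify that the associated linear operator has a coefficient that is at worst singular only where $u=0$, and argue carefully on interior subdomains where $u$ is already known to be bounded below by a positive constant so that $u^{p-1}$ stays bounded. A clean way to organize this is to first establish interior positivity on compact subsets by a connectedness/continuity argument propagating forward in time from the set where $u_0>0$, and only afterward invoke Hopf's lemma at a hypothetical boundary zero; the interior analysis must precede the boundary analysis since Hopf's lemma requires $u>0$ in a neighborhood interior to $\Omega$.
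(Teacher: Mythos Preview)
Your proof follows the same two-step structure as the paper's: interior positivity via the strong maximum principle, then Hopf's boundary lemma contradicting the nonnegative normal derivative coming from the nonlocal boundary condition. The one point worth flagging is that your linearization step and the worry about $p<1$ are unnecessary detours. Since $c\ge 0$ and $u\ge 0$, the identity $u_t-\Delta u=c(x,t)u^p\ge 0$ already says that $u$ is a supersolution of the \emph{pure heat equation}, with no zero-order term whatsoever; the strong maximum principle for the heat operator then applies directly and gives $u>0$ in $Q_T$ from $u_0\not\equiv 0$, with no coefficient $\tilde c$ to control and no issue of $u^{p-1}$ blowing up. The paper's proof uses exactly this shortcut, so the ``main obstacle'' you anticipate in your last paragraph does not arise.
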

\begin{proof}
Since $u_0(x)\not\equiv0$ in $\Omega$ and $u_t-\Delta
u=c(x,t)u^p\geq0$ in $Q_T$, by the strong maximum principle
$u(x,t)>0$ in $Q_T.$ Let $u(x_0,t_0)=0$ in some point
$(x_0,t_0)\in S_T.$ Then according to Theorem~3.6 of~\cite{Hu} it
yields $\partial u(x_0,t_0)/\partial\nu<0$, which
    contradicts~(\ref{v:g}).
\end{proof}

\begin{theorem}\label{p:theorem:comp-prins}
     Let $u(x,t)$ and $v(x,t)$ be a supersolution and a subsolution of problem~(\ref{v:u})--(\ref{v:n}) in $Q_{T}$, respectively, with $u(x,0)\geq v(x,0)$ in $\Omega$. Suppose that $u(x,t)>0$ or $v(x,t)>0$ in $Q_T\cup\Gamma_T$ if $\min(p,l)<1$. Then $u(x,t)\geq v(x,t)$ in $Q_T\cup\Gamma_T$.
\end{theorem}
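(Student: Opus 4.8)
The plan is to prove the equivalent statement that $w^{+}\equiv0$, where $w=v-u$ and $w^{+}=\max(w,0)$, by deriving an energy inequality for $\int_{\Omega}(w^{+})^{2}\,dx$ and invoking Gronwall's inequality. Fix $\tau\in(0,T)$; it suffices to argue on $\overline{Q_\tau}$ and then let $\tau\to T$. Subtracting the defining inequalities for the supersolution $u$ and the subsolution $v$ yields $w_{t}-\Delta w\le c(x,t)(v^{p}-u^{p})$ in $Q_\tau$, $\partial w/\partial\nu\le\int_{\Omega}k(x,y,t)\bigl(v^{l}(y,t)-u^{l}(y,t)\bigr)\,dy$ on $S_\tau$, and $w(x,0)\le0$ in $\Omega$.

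The first step is a pointwise estimate of the nonlinear terms on the set $\{w>0\}=\{v>u\}$, where $0\le u<v$. I claim that there $v^{p}-u^{p}\le Cw$ and $v^{l}-u^{l}\le Cw$ for a constant $C$ depending only on $p$, $l$, $\sup_{\overline{Q_\tau}}u$, $\sup_{\overline{Q_\tau}}v$, and, when $\min(p,l)<1$, on the positivity constant $\delta>0$ with $u\ge\delta$ or $v\ge\delta$ supplied by hypothesis. For an exponent $\ge1$ this is immediate from the mean value theorem and the boundedness of the solutions, with no positivity needed. For an exponent in $(0,1)$ the map $s\mapsto s^{p}$ is concave, so the secant slope $(v^{p}-u^{p})/(v-u)$ is bounded above by $p\,u^{p-1}\le p\,\delta^{p-1}$ when $u\ge\delta$, and by $v^{p-1}\le\delta^{p-1}$ when only $v\ge\delta$; the same holds for $l$. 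This is exactly where the positivity hypothesis for $\min(p,l)<1$ enters.

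Next I multiply the interior inequality by $w^{+}\ge0$, integrate over $\Omega$, use $\int_{\Omega}w^{+}w_{t}\,dx=\tfrac12\frac{d}{dt}\int_{\Omega}(w^{+})^{2}\,dx$, and integrate the Laplacian term by parts, obtaining
\[
\frac12\frac{d}{dt}\int_{\Omega}(w^{+})^{2}\,dx+\int_{\Omega}|\nabla w^{+}|^{2}\,dx\le\int_{\Omega}c(v^{p}-u^{p})w^{+}\,dx+\int_{\partial\Omega}w^{+}\frac{\partial w}{\partial\nu}\,dS.
\]
The interior integral is $\le C\int_{\Omega}(w^{+})^{2}\,dx$ by the estimate above. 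The boundary term is the main obstacle: the nonlocal condition couples $\partial w/\partial\nu$ at a boundary point to the values of $w$ throughout $\Omega$, so a pointwise maximum principle (even combined with Hopf's lemma) does not close, since the sign of $\int_{\Omega}k(x,y,t)(v^{l}-u^{l})\,dy$ cannot be controlled at a would-be boundary extremum.

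To handle it, I insert the boundary inequality together with the pointwise bound $v^{l}-u^{l}\le Cw^{+}$ (valid also on $\{v\le u\}$, where both sides vanish) and $k\ge0$ to get $\int_{\partial\Omega}w^{+}\,\partial w/\partial\nu\,dS\le C\,\|w^{+}\|_{L^{1}(\partial\Omega)}\,\|w^{+}\|_{L^{1}(\Omega)}$. By Cauchy--Schwarz and the trace inequality $\|w^{+}\|_{L^{2}(\partial\Omega)}^{2}\le\varepsilon\|\nabla w^{+}\|_{L^{2}(\Omega)}^{2}+C_{\varepsilon}\|w^{+}\|_{L^{2}(\Omega)}^{2}$, choosing $\varepsilon$ small and applying Young's inequality absorbs the gradient contribution into the left-hand side and leaves $\frac{d}{dt}\int_{\Omega}(w^{+})^{2}\,dx\le C\int_{\Omega}(w^{+})^{2}\,dx$. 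Since $\int_{\Omega}(w^{+})^{2}(x,0)\,dx=0$, Gronwall's inequality forces $\int_{\Omega}(w^{+})^{2}\,dx\equiv0$ on $[0,\tau]$, hence $u\ge v$ on $\overline{Q_\tau}$; letting $\tau\to T$ and using continuity on $Q_{T}\cup\Gamma_{T}$ completes the proof. The integrations by parts are justified by the $C^{2,1}(Q_{T})\cap C^{1,0}(Q_{T}\cup\Gamma_{T})$ regularity after a routine smooth approximation of $w^{+}$.
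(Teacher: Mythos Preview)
Your argument is correct and follows a genuinely different route from the paper's. The paper proceeds by a duality argument: it multiplies the differential inequalities by a smooth nonnegative test function $\varphi$, integrates over $Q_t$, moves all spatial derivatives onto $\varphi$, and then \emph{chooses} $\varphi$ to solve the backward adjoint problem $\varphi_\tau+\Delta\varphi+p\theta_1^{p-1}c\,\varphi=0$ with homogeneous Neumann data and terminal datum $\psi\in C_0^\infty(\Omega)$. This kills the interior terms and leaves an $L^1$ inequality $\int_\Omega w_+\,\psi\le m\int_0^t\int_\Omega w_+$, from which Gronwall gives $w_+\equiv 0$. Your proof instead tests directly with $w^+$, keeps the good gradient term $\int_\Omega|\nabla w^+|^2$, and controls the nonlocal boundary contribution by the trace inequality $\|w^+\|_{L^2(\partial\Omega)}^2\le\varepsilon\|\nabla w^+\|_{L^2}^2+C_\varepsilon\|w^+\|_{L^2}^2$, arriving at an $L^2$ Gronwall inequality. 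Both approaches use the positivity hypothesis for exactly the same purpose---to bound the secant slopes of $s\mapsto s^p$ and $s\mapsto s^l$ when the exponents are below $1$---and your treatment of that step is clean. The trade-offs: the paper's duality avoids any question about the integrability of $\Delta w$ up to $\partial\Omega$ (since all second derivatives land on the smooth $\varphi$), whereas your energy method is more direct and avoids solving an auxiliary PDE but leans on the trace inequality and on the Stampacchia-type justification you allude to at the end.
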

\begin{proof}
Let $\varphi(x,\tau)\in C^{2,1}(\overline{Q}_{t})\;(0<t< T)$ be a
nonnegative function which satisfies homogeneous Neumann boundary
condition. Multiplying (\ref{v:sup^u}) by $\varphi$ and then
integrating over $Q_t,$ we
     obtain
    \begin{eqnarray}\label{p:vsp1}
       &&\int_\Omega{u(x,t)\varphi(x,t)}\,dx      \geq\int_\Omega{u(x,0)\varphi(x,0)}\,dx\\
       &&+\int_0^t\int_\Omega{\left(u(x,\tau)\varphi_\tau(x,\tau)
       +u(x,\tau)\Delta\varphi(x,\tau)+c(x,\tau)u^p(x,\tau)\varphi(x,\tau)\right)}\,dx\,d\tau\nonumber\\
       &&+\int_0^t{\int_{\partial\Omega}{\varphi(x,\tau)\int_{\Omega}{k(x,y,\tau)u^l(y,\tau)}\,dy}}\,dS_x\,d\tau.\nonumber
    \end{eqnarray}
    On the other hand, the subsolution $v(x,t)$ satisfies~(\ref{p:vsp1}) with reversed inequality
     \begin{eqnarray}\label{p:vsp2}
        &&\int_\Omega{v(x,t)\varphi(x,t)}\,dx\leq\int_\Omega{v(x,0)\varphi(x,0)}\,dx\\
        &&+\int_0^t\int_\Omega{\left(v(x,\tau)\varphi_\tau(x,\tau)+v(x,\tau)\Delta\varphi(x,\tau)+c(x,\tau)v^p(x,\tau)\varphi(x,\tau)\right)}\,dx\,d\tau\nonumber\\
        &&+\int_0^t{\int_{\partial\Omega}{\varphi(x,\tau)\int_{\Omega}{k(x,y,\tau)v^l(y,\tau)}\,dy}}\,dS_x\,d\tau.\nonumber
    \end{eqnarray}
Put $M=\max(\max\limits_{\overline Q_t}u,\max\limits_{\overline
Q_t}v)$ and $w(x,t)=v(x,t)-u(x,t)$. Subtracting (\ref{p:vsp1})
from~(\ref{p:vsp2}) and using mean value theorem, we get
    \begin{eqnarray}\label{p:vsp3}
        &&\int_\Omega{w(x,t)\varphi(x,t)}\,dx\leq\int_\Omega{w(x,0)\varphi(x,0)}\,dx\\
        &&+\int_0^t\int_\Omega{w(x,\tau)\left(\varphi_\tau(x,\tau)+\Delta\varphi(x,\tau)+p\theta_1^{p-1}(x,\tau)c(x,\tau)\varphi(x,\tau)\right)}\,dx\,d\tau\nonumber\\
        &&+l\int_0^t{\int_{\partial\Omega}{\varphi(x,\tau)\int_{\Omega}{\theta_2^{l-1}(y, \tau)k(x, y, \tau)w(y,\tau)}\,dy}}\,dS_x\,d\tau,\nonumber
    \end{eqnarray}
    where $\theta_i(x,\tau)\;(i=1,2)$ are some positive continuous functions in $\overline Q_t$ if $\min(p,l)<1$ and some nonnegative continuous functions in $\overline Q_t$ otherwise.

The function $\varphi(x,\tau)$ is defined as a solution of the
following problem
    \begin{equation*}
        \varphi_\tau+\Delta\varphi+p\theta_1^{p-1}(x,\tau)c(x,\tau)\varphi=0,\;(x,\tau)\in Q_t,
    \end{equation*}
    \begin{equation*}
        \frac{\partial\varphi(x,\tau)}{\partial\nu}=0,\;(x,\tau)\in S_t,
    \end{equation*}
    \begin{equation*}
        \varphi(x,t)=\psi(x),\;x\in\Omega,
    \end{equation*}
    where $\psi(x)\in C^\infty_0(\Omega),\;0\leq\psi\leq1$. By virtue of the comparison principle for linear parabolic equations the solution $\varphi(x,\tau)$ of this problem is nonnegative and bounded. By~(\ref{p:vsp3}) and $w(x,0)\leq0$ we have
    \begin{equation}\label{p:vsp4}
        \int_\Omega{w(x,t)\psi(x)}\,dx\leq m\int_0^t{{\int_{\Omega}{w_+(x,\tau)}\,dx}}\,d\tau,
    \end{equation}
    where $s_+=\max(0,s),\;m=l|\partial\Omega|\sup\limits_{\partial\Omega\times Q_t}k(x,y,\tau)\sup\limits_{Q_t}\theta_2^{l-1}(x,\tau)\sup\limits_{S_t}\varphi(x,\tau)$, $|\partial\Omega|$ is the Lebesgue measure of $\partial\Omega$.
    Since the inequality holds for every function $\psi(x)$, we can choose a sequence $\psi_n(x)\in C_0^\infty(\Omega)$ converging in $L^1(\Omega)$ to the function
    \begin{equation*}
    \gamma(x)=
        \begin{cases}
            1,\;w(x,t)>0,\\
            0,\;w(x,t)\leq0.
        \end{cases}
    \end{equation*}
Substituting $\psi_n(x)$ instead of $\psi(x)$ in~(\ref{p:vsp4})
and letting $n\to\infty,$ we get
    \begin{equation*}
        \int_\Omega{w_+(x,t)}\,dx\leq m\int_0^t{{\int_{\Omega}{w_+(x,\tau)}\,dx}}\,d\tau.
    \end{equation*}
    By the Gronwall inequality we obtain $w_+(x,t)\leq0$.
\end{proof}
From Theorem~\ref{p:theorem:comp-prins} the following assertion is
easily deduced.
\begin{theorem}\label{p:conseq:uniq}
    Suppose that problem~(\ref{v:u})--(\ref{v:n}) has a solution in $Q_T$ with any nonnegative initial data for $\min(p,l)\geq1$ and with positive initial data otherwise. Then the solution of~(\ref{v:u})--(\ref{v:n}) is unique in $Q_T$.
\end{theorem}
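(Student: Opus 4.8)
The plan is to derive uniqueness as a direct consequence of the comparison principle (Theorem~\ref{p:theorem:comp-prins}). Suppose $u(x,t)$ and $v(x,t)$ are two solutions of problem~(\ref{v:u})--(\ref{v:n}) in $Q_T$ sharing the same initial datum $u_0$. Since each solution is by Definition~\ref{v:sup} simultaneously a supersolution and a subsolution, I can apply the comparison principle twice with the roles of the two solutions interchanged.

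First I would treat $u$ as the supersolution and $v$ as the subsolution. Because both carry the initial datum $u_0$, the hypothesis $u(x,0)\geq v(x,0)$ in $\Omega$ holds with equality, so Theorem~\ref{p:theorem:comp-prins} yields $u(x,t)\geq v(x,t)$ in $Q_T\cup\Gamma_T$. Then I would swap the roles, regarding $v$ as the supersolution and $u$ as the subsolution; the same argument gives $v(x,t)\geq u(x,t)$ in $Q_T\cup\Gamma_T$. Combining the two inequalities forces $u\equiv v$, which is the claimed uniqueness.

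The one point requiring care is the positivity hypothesis built into Theorem~\ref{p:theorem:comp-prins} for the non-Lipschitz regime $\min(p,l)<1$: the comparison principle there demands that at least one of the two functions be positive in $Q_T\cup\Gamma_T$. This is exactly why the statement of Theorem~\ref{p:conseq:uniq} restricts to positive initial data when $\min(p,l)<1$. Indeed, under that restriction Theorem~\ref{p:theorem:positive} guarantees $u(x,t)>0$ and $v(x,t)>0$ in $Q_T\cup S_T$, and positivity on the remaining portion $\overline\Omega\times\{0\}$ of $\Gamma_T$ follows from $u_0>0$; hence the positivity requirement is met in both applications of the comparison principle. When $\min(p,l)\geq1$ the comparison principle needs no positivity assumption, so nonnegative initial data suffice. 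The main obstacle, such as it is, is therefore bookkeeping: verifying that the positivity hypothesis of Theorem~\ref{p:theorem:comp-prins} is genuinely available in each of the two applications, rather than any substantive new estimate.
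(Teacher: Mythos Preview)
Your argument is correct and matches the paper's own approach: the paper states that Theorem~\ref{p:conseq:uniq} is ``easily deduced'' from Theorem~\ref{p:theorem:comp-prins}, and the intended deduction is precisely the double application of the comparison principle with the roles of $u$ and $v$ swapped. Your bookkeeping on the positivity hypothesis when $\min(p,l)<1$ is the only thing one has to check, and you handle it correctly via Theorem~\ref{p:theorem:positive} together with the assumption $u_0>0$.
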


\section{ Local existence}\label{v}

In this section we establish the local existence of solution
for~(\ref{v:u})--(\ref{v:n}) using representation formula and the
contraction mapping argument.

Let $\{\varepsilon_m\}$ be decreasing to $0$ sequence such that
$0<\varepsilon_m<1$. For $\varepsilon=\varepsilon_m$ let
$u_{0\varepsilon}(x)$ be the functions with the following
properties:
\begin{gather*}
u_{0\varepsilon}(x)\in
C^1(\overline\Omega),\;u_{0\varepsilon}(x)\geq\varepsilon,\;u_{0\varepsilon_i}(x)\geq
u_{0\varepsilon_j}(x),\;\varepsilon_i\geq\varepsilon_j,\\
u_{0\varepsilon}(x)\to u_0(x)\textrm{ as
}\varepsilon\to0,\;\frac{\partial
u_{0\varepsilon}(x)}{\partial\nu}=\int_{\Omega}{k(x,y,0)u_{0\varepsilon}^l(y)}\,dy
\textrm{ for }x\in\partial\Omega.
\end{gather*}
Since the nonlinearities in~(\ref{v:u}) and (\ref{v:g}), the
Lipschitz condition is not satisfied if $\min (p,l) < 1,$ and thus
we need to consider the auxiliary problem for equation~(\ref{v:u})
with boundary condition~(\ref{v:g}) and the initial datum
\begin{equation}\label{v:nvsp}
u_\varepsilon(x,0)=u_{0\varepsilon}(x),\; x\in\Omega.
\end{equation}
\begin{theorem}
For some values of $T$ problem~(\ref{v:u}), (\ref{v:g}),
(\ref{v:nvsp}) has a unique solution in $Q_T$.
\end{theorem}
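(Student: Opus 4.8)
The plan is to recast (\ref{v:u}), (\ref{v:g}), (\ref{v:nvsp}) as an integral fixed-point equation using the Green's function $G(x,y,t)$ of the heat operator in $\Omega$ under the homogeneous Neumann boundary condition, and then to apply the Banach contraction principle in a closed subset of $C(\overline{Q}_T)$ on which the data are bounded away from $0$. Recall that $G$ is nonnegative, satisfies $\int_\Omega G(x,y,t)\,dy=1$, obeys a Gaussian bound $G(x,y,t)\le Ct^{-n/2}\exp(-c|x-y|^2/t)$, and — the estimate that makes the boundary datum tractable — satisfies $\int_0^t\int_{\partial\Omega}G(x,y,t-\tau)\,dS_y\,d\tau\le C\sqrt{t}$ uniformly in $x$, because the single-layer kernel has the integrable singularity $(t-\tau)^{-1/2}$. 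A solution is then a fixed point of
\begin{align*}
\Phi(u)(x,t)&=\int_\Omega G(x,y,t)u_{0\varepsilon}(y)\,dy
+\int_0^t\int_\Omega G(x,y,t-\tau)c(y,\tau)u^p(y,\tau)\,dy\,d\tau\\
&\quad+\int_0^t\int_{\partial\Omega}G(x,y,t-\tau)\int_\Omega k(y,z,\tau)u^l(z,\tau)\,dz\,dS_y\,d\tau.
\end{align*}

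Next I would set up the space. Writing $v_0(x,t)=\int_\Omega G(x,y,t)u_{0\varepsilon}(y)\,dy$ for the free evolution of the data, the properties $G\ge0$ and $\int_\Omega G\,dy=1$ give, by the maximum principle, $\varepsilon\le\inf_\Omega u_{0\varepsilon}\le v_0\le\sup_\Omega u_{0\varepsilon}$. I take
\[
X=\{u\in C(\overline{Q}_T):\ \|u-v_0\|_{C(\overline{Q}_T)}\le\varepsilon/2\},
\]
a complete metric space under the sup-norm, on which every $u$ satisfies $\varepsilon/2\le u\le\sup_\Omega u_{0\varepsilon}+\varepsilon/2$. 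Keeping $u$ away from $0$ is exactly what is needed: on the compact range $[\varepsilon/2,\sup_\Omega u_{0\varepsilon}+\varepsilon/2]\subset(0,\infty)$ the maps $s\mapsto s^p$ and $s\mapsto s^l$ are Lipschitz, although the nonlinearities are only Hölder when $\min(p,l)<1$.

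The two contraction-principle hypotheses follow from the kernel estimates. For $u\in X$ the nonlinear terms are bounded, so using $\int_\Omega G\,dy=1$ for the volume potential and the boundary estimate above yields $\|\Phi(u)-v_0\|_{C(\overline{Q}_T)}\le C_1T+C_2\sqrt{T}$, which is $\le\varepsilon/2$ once $T$ is small, giving $\Phi(X)\subset X$. For $u,\tilde u\in X$, the Lipschitz bounds on $s^p,s^l$ combined with the same estimates give $\|\Phi(u)-\Phi(\tilde u)\|_{C(\overline{Q}_T)}\le(C_3T+C_4\sqrt{T})\|u-\tilde u\|_{C(\overline{Q}_T)}$, a contraction for $T$ sufficiently small. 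Banach's theorem then provides a unique fixed point $u_\varepsilon\in X$.

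Finally I would check that this mild solution is classical: since $u_\varepsilon$ is bounded away from $0$ and $\infty$, the term $c\,u_\varepsilon^p$ and the boundary datum $\int_\Omega k\,u_\varepsilon^l\,dz$ are Hölder continuous, so parabolic Schauder theory for the linear Neumann problem upgrades $u_\varepsilon$ to $C^{2,1}(Q_T)\cap C^{1,0}(Q_T\cup\Gamma_T)$ solving (\ref{v:u}), (\ref{v:g}), (\ref{v:nvsp}) classically. Uniqueness is then not merely uniqueness within $X$: because $u_{0\varepsilon}\ge\varepsilon>0$ is positive, Theorem~\ref{p:conseq:uniq} applies and gives uniqueness outright. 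I expect the main obstacle to be the boundary potential — verifying that it defines a function continuous up to $S_T$ and up to $t=0$, and that the $(t-\tau)^{-1/2}$ singularity integrates to the $C\sqrt{T}$ factor driving both invariance and contraction; the volume potential and the initial term are routine.
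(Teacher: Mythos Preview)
Your proposal is correct and follows essentially the same strategy as the paper: both recast the problem as the Green-function integral equation and run a contraction argument, exploiting the positive lower bound coming from $u_{0\varepsilon}\ge\varepsilon$ to make $s\mapsto s^p,\ s\mapsto s^l$ Lipschitz, together with the estimates $\nu(t)\lesssim t$ and $\mu(t)\lesssim\sqrt{t}$. The only cosmetic differences are that the paper writes out the Picard iteration explicitly (starting from $u_{\varepsilon,1}\equiv\varepsilon$ and proving $\varepsilon\le u_{\varepsilon,n}\le M$ by induction) rather than invoking Banach's theorem on a ball, and obtains uniqueness by repeating the contraction estimate directly instead of appealing to Theorem~\ref{p:conseq:uniq}.
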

\begin{proof}
Let $G_N(x,y;t-\tau)$ be the Green function for the heat equation
with homogeneous Neumann boundary condition. We note that the
function $G_N(x,y;t-\tau)$ has the following properties (see, for
example, \cite{Kahane}):
    \begin{equation}\label{p:1G_N}
G_N(x,y;t-\tau)\geq0,\;x,y\in\Omega,\;0\leq\tau<t<T,
    \end{equation}
    \begin{equation}\label{p:3G_N}
\int_{\Omega}{G_N(x,y;t-\tau)}\,dy=1,\;x\in\Omega,\;0\leq\tau<t<T.
    \end{equation}
It is well known that problem~(\ref{v:u}), (\ref{v:g}),
(\ref{v:nvsp}) in $Q_{T}$ is equivalent to the equation
    \begin{eqnarray}\label{le:equat}
u_\varepsilon(x,t)&=&\int_\Omega{G_N(x,y;t)u_{0\varepsilon}(y)}\,dy+\int_0^t{\int_\Omega{G_N(x,y;t-\tau)c(y,\tau)u^p_\varepsilon(y,\tau)}\,dy}\,d\tau\nonumber\\
&+&\int_0^t{\int_{\partial\Omega}{G_N(x,\xi;t-\tau)\int_{\Omega}{k(\xi,y,\tau)u^l_\varepsilon(y,\tau)}\,dy}}\,dS_\xi\,d\tau\equiv
Lu_\varepsilon(x,t).
    \end{eqnarray}
To show that~(\ref{le:equat}) is solvable for small $T$ we use the
contraction mapping argument. To this end we define a sequence of
functions $\{u_{\varepsilon,n}(x,t)\},\;n=1,2,\dots$, in the
following way:
    \begin{equation}\label{p:u_1}
    u_{\varepsilon,1}(x,t)\equiv\varepsilon,\;(x,t)\in\overline Q_T,
    \end{equation}
    and
    \begin{equation}\label{le:seq}
        u_{\varepsilon,n+1}(x,t)=Lu_{\varepsilon,n}(x,t),\;(x,t)\in\overline Q_T,\;n=1,2,\dots\,.
    \end{equation}

    Set
    \begin{equation*}
        M_{0\varepsilon}=\sup\limits_{x\in\Omega}u_{0\varepsilon}(x).
    \end{equation*}
    Using the method of mathematical induction we prove that the inequalities
    \begin{equation}\label{le:induction}
    \sup\limits_{Q_{T_1}}u_{\varepsilon,n}(x,t)\leq M,\;n=1,2,\dots,
    \end{equation}
hold for some constants $T_1>0$ and
$M>\max(\varepsilon,M_{0\varepsilon})$. For $n=1$ the validity
of~(\ref{le:induction}) is obvious. Supposing
that~(\ref{le:induction}) is true for $n=m,$ we shall prove it for
$n=m+1$. Indeed, by~(\ref{p:1G_N})--(\ref{le:equat})
and~(\ref{le:seq}) we have
     \begin{eqnarray}\label{p:equation)}
u_{\varepsilon,m+1}(x,t)&=&\int_\Omega{G_N(x,y;t)u_{0\varepsilon}(y)}\,dy\nonumber\\
&+&\int_0^t{\int_\Omega{G_N(x,y;t-\tau)c(y,\tau)u_{\varepsilon,m}^p(y,\tau)}\,dy}\,d\tau\nonumber\\
&+&\int_0^t{\int_{\partial\Omega}{G_N(x,\xi;t-\tau)\int_{\Omega}{k(\xi,y,\tau)u_{\varepsilon,m}^l(y,\tau)}\,dy}\,dS_\xi}\,d\tau\nonumber\\
&\leq& M_{0\varepsilon}+M^p\nu(t)+M^l\mu(t),
    \end{eqnarray}
    where
    \begin{equation*}
        \nu(t)=\sup\limits_{x\in\Omega}\int_0^t{\int_\Omega{G_N(x,y;t-\tau)c(y,\tau)}\,dy}\,d\tau,
    \end{equation*}
    \begin{equation*}
       \mu(t)=\sup\limits_{x\in\Omega}\int_0^t{\int_{\partial\Omega}{G_N(x,\xi;t-\tau)\int_{\Omega}{k(\xi,y,\tau)}\,dy}\,}\,dS_\xi\,d\tau.
    \end{equation*}

We note that (see~\cite{Hu_Yin1}) there exist positive constants
$\delta_1$ and $a_1$ such that
    \begin{equation}\label{p:m_n1}
       \mu(t)\leq a_1\sqrt{t}\textrm{ for }t\leq\delta_1.
    \end{equation}
    Due to~(\ref{p:1G_N}), (\ref{p:3G_N}) we have
    \begin{equation}\label{p:m_n2}
      \nu(t)\leq a_2t\textrm{ for }t\leq\delta_2,
    \end{equation}
    where $\delta_2$ and $a_2$ are some positive constants.
    We choose $0<T_1<\min(\delta_1,\delta_2)$ such that
    \begin{equation}\label{p:22}
       \sup\limits_{0<t<T_1}(M^p\nu(t)+M^l\mu(t))\leq M-M_{0\varepsilon}.
    \end{equation}
    By virtue of~(\ref{p:equation)}) and (\ref{p:22}) we have~(\ref{le:induction}) with $n=m+1$.
    By~(\ref{p:1G_N})--(\ref{le:seq}) and the properties of $u_{0\varepsilon}(x)$ we get
    \begin{equation}\label{le:111}
    u_{\varepsilon,n}(x,t)\geq\varepsilon,\;(x,t)\in\overline Q_{T_1},\;n=1,2,\dots\;.
    \end{equation}
    Using mean value theorem we obtain for $n=2,3,\dots$
    \begin{eqnarray*}
&&\hspace{-0.7cm}\sup\limits_{Q_{T_1}}|u_{\varepsilon,n+1}(x,t)-u_{\varepsilon,n}(x,t)|\\
&=&\sup\limits_{Q_{T_1}}|\int_0^t{\int_\Omega{G_N(x,\xi;t-\tau)c(y,\tau)(u_{\varepsilon,n}^p(\xi,\tau)-u_{\varepsilon,n-1}^p(\xi,\tau))}\,d\xi}\,d\tau\\
&+&\int_0^t{\int_{\partial\Omega}{G_N(x,\xi;t-\tau)\int_{\Omega}{k(\xi,y,\tau)(u_{\varepsilon,n}^l(y,\tau)-u_{\varepsilon,n-1}^l(y,\tau))}\,dy}\,dS_\xi}\,d\tau       |\\
&\leq&\sup\limits_{Q_{T_1}}\left(p\theta_{1,n}^{p-1}(x,t)\nu(t)+l\theta_{2,n}^{l-1}(x,t)\mu(t)\right)\sup\limits_{Q_{T_1}}
|u_{\varepsilon,n}(x,t)-u_{\varepsilon,n-1}(x,t)|\\
&\leq&\sup\limits_{(0,T_1)}\rho(t)\sup\limits_{Q_{T_1}}|u_{\varepsilon,n}(x,t)-u_{\varepsilon,n-1}(x,t)|
\leq(M+\varepsilon)\left(\sup\limits_{(0,T_1)}\rho(t)\right)^{n-1},
    \end{eqnarray*}
where $\theta_{i,n}(x,t)\;(i=1,2)$ are continuous functions in
$\overline Q_{T_1}$ such that $\alpha_1\leq\theta_{i,n}(x,t)\leq
M_1$ for $(x,t)\in \overline Q_{T_1}$,
$\rho(t)=p(\alpha_1^{p-1}+M_1^{p-1})\nu(t)+l(\alpha_1^{l-1}+M_1^{l-1})\mu(t)$
for $t\in[0,T_1]$. We note that positive constants $\alpha_1$ and
$M_1$ do not depend on $n$. By~(\ref{p:m_n1}) and~(\ref{p:m_n2})
there exists a constant $T\in(0,T_1)$ such that
    \begin{equation*}
        \sup\limits_{(0,T)}\rho(t)<1.
    \end{equation*}
Hence, the sequence $\{u_{\varepsilon,n}(x,t)\}$ converges
uniformly in $\overline Q_T$ as $n\to\infty$. We denote
    \begin{equation*}
        u_\varepsilon(x,t)=\lim\limits_{n\to\infty} u_{\varepsilon,n}(x,t).
    \end{equation*}
By virtue of~(\ref{le:induction}), (\ref{le:111}) we have
    \begin{equation*}
       \varepsilon\leq u_\varepsilon(x,t)\leq M,\;(x,t)\in\overline Q_T.
    \end{equation*}
Passing to the limit as $n\to\infty$ in~(\ref{le:seq}) by
dominated convergence theorem we obtain that the function
$u_\varepsilon(x,t)$  satisfies~(\ref{le:equat}). Hence,
$u_\varepsilon(x,t)$ solves problem~(\ref{v:u}), (\ref{v:g}),
(\ref{v:nvsp}) in $Q_T$.

By contradiction we shall prove uniqueness of the solution
of~(\ref{v:u}), (\ref{v:g}), (\ref{v:nvsp}) in $Q_T$ for small
values of $T$. Let problem~(\ref{v:u}), (\ref{v:g}),
(\ref{v:nvsp}) have at least two solutions $u_\varepsilon(x,t)$
and $v_\varepsilon(x,t)$ in $Q_T.$ Arguing as above we get
    \begin{eqnarray*}
        &&\hspace{-0.7cm}\sup\limits_{Q_T}|u_\varepsilon(x,t)-v_\varepsilon(x,t)|\\
        &=&\sup\limits_{Q_T}|\int_0^t{\int_\Omega{G_N(x,\xi;t-\tau)c(y,\tau)(u_\varepsilon^p(\xi,\tau)-v_\varepsilon^p(\xi,\tau))}\,d\xi}\,d\tau\\
        &+&\int_0^t{\int_{\partial\Omega}{G_N(x,\xi;t-\tau)\int_{\Omega}{k(\xi,y,\tau)(u_\varepsilon^l(y,\tau)-v_\varepsilon^l(y,\tau))}\,dy}\,dS_\xi}\,d\tau|\\
        &\leq&\sup\limits_{Q_T}\left(p\theta_1^{p-1}(x,t)\nu(t)+l\theta_2^{l-1}(x,t)\mu(t)\right)\sup\limits_{Q_T}
        |u_\varepsilon(x,t)-v_\varepsilon(x,t)|\\
        &\leq&\alpha\sup\limits_{Q_T}|u_\varepsilon(x,t)-u_\varepsilon(x,t)|,
    \end{eqnarray*}
where $\theta_i(x,t)\;(i=1,2)$ are some positive continuous
functions in $\overline Q_T$ and $\alpha<1$ for small values of
$T$. Obviously, $u_\varepsilon(x,t) = v_\varepsilon(x,t)$ in
$Q_T.$
\end{proof}
\begin{theorem}\label{p:max_eq}
    For some values of  $T$ problem~(\ref{v:u})--(\ref{v:n}) has maximal solution in $Q_T$.
\end{theorem}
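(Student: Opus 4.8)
The plan is to obtain the maximal solution as the monotone limit, as $\varepsilon\to0$, of the regularized solutions $u_\varepsilon$ produced by the preceding theorem. The first task is to arrange a single existence time for all of them. Since the approximating data satisfy $u_{0\varepsilon}\leq u_{0\varepsilon_1}$, the constant $M_{0\varepsilon}=\sup_\Omega u_{0\varepsilon}$ is bounded by $M_{0\varepsilon_1}$ uniformly in $m$, whereas the functions $\nu(t)$ and $\mu(t)$ do not depend on $\varepsilon$; hence one constant $M$ and one $T$ can be chosen so that (\ref{le:induction}) and (\ref{p:22}) hold for every $\varepsilon=\varepsilon_m$. Combined with the $\varepsilon$-uniform a priori bound $\varepsilon_m\leq u_{\varepsilon_m}\leq M$, this lets every problem (\ref{v:u}), (\ref{v:g}), (\ref{v:nvsp}) be solved on the \emph{same} cylinder $Q_T$.

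Second, I would compare the members of the family. For $\varepsilon_i\geq\varepsilon_j$ both $u_{\varepsilon_i}$ and $u_{\varepsilon_j}$ satisfy the equation and the boundary condition exactly, so $u_{\varepsilon_i}$ may be regarded as a supersolution and $u_{\varepsilon_j}$ as a subsolution, with ordered initial values $u_{\varepsilon_i}(x,0)=u_{0\varepsilon_i}\geq u_{0\varepsilon_j}=u_{\varepsilon_j}(x,0)$, and both strictly positive; Theorem~\ref{p:theorem:comp-prins} then yields $u_{\varepsilon_i}\geq u_{\varepsilon_j}$ in $Q_T\cup\Gamma_T$. Thus $\{u_{\varepsilon_m}\}$ decreases as $\varepsilon_m\downarrow0$ and is bounded below by $0$, so the pointwise limit $u(x,t)=\lim_{m\to\infty}u_{\varepsilon_m}(x,t)$ exists and satisfies $0\leq u\leq M$.

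Third, I would identify $u$ as a solution and prove maximality. Passing to the limit in the integral identity (\ref{le:equat}) is legitimate by dominated convergence, because $u_{\varepsilon_m}^p\leq M^p$, $u_{\varepsilon_m}^l\leq M^l$ and $u_{0\varepsilon_m}\to u_0$; hence $u$ satisfies (\ref{le:equat}) with data $u_0$. Maximality is then immediate from the comparison principle: any solution $v$ of (\ref{v:u})--(\ref{v:n}) has $v(x,0)=u_0\leq u_{0\varepsilon_m}=u_{\varepsilon_m}(x,0)$ while $u_{\varepsilon_m}>0$, so $v\leq u_{\varepsilon_m}$ in $Q_T\cup\Gamma_T$ for each $m$, and letting $m\to\infty$ gives $v\leq u$.

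The step I expect to be the main obstacle is the regularity of the limit, needed to make $u$ a solution in the sense of Definition~\ref{v:sup}: the monotone limit is a priori only bounded and measurable, while membership in $C^{2,1}(Q_T)\cap C^{1,0}(Q_T\cup\Gamma_T)$ is required. I would recover this from the smoothing of the Green operator in (\ref{le:equat}): the volume and boundary heat potentials of the bounded densities $c\,u^p$ and $k\,u^l$ are H\"older continuous, and $\int_\Omega G_N(x,y;t)u_0(y)\,dy\to u_0$ as $t\to0$, which gives continuity of $u$ up to $\Gamma_T$; once $u$ is continuous the densities are themselves H\"older continuous, and a standard parabolic Schauder bootstrap upgrades $u$ to $C^{2,1}(Q_T)\cap C^{1,0}(Q_T\cup\Gamma_T)$, so that $u$ solves (\ref{v:u})--(\ref{v:n}) classically and is the sought maximal solution.
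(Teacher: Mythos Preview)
Your proposal is correct and follows essentially the same route as the paper: construct the maximal solution as the decreasing limit of the regularized solutions $u_\varepsilon$, compare them via Theorem~\ref{p:theorem:comp-prins}, pass to the limit in the integral representation~(\ref{le:equat}) by dominated convergence, and deduce maximality by comparing any solution $v$ with each $u_\varepsilon$. The only noteworthy difference is in how the limit is handled: the paper invokes Dini's theorem to get uniform convergence on $\overline{Q}_T$ directly (leaving the required continuity of the limit implicit), whereas you work with the pointwise monotone limit and then recover $C^{2,1}\cap C^{1,0}$ regularity a posteriori from the smoothing properties of the heat potentials; your argument is more explicit on the regularity step that the paper simply asserts.
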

\begin{proof}
Let $u_\varepsilon$ be a solution of~(\ref{v:u}), (\ref{v:g}),
(\ref{v:nvsp}). It is easy to see that $u_\varepsilon$ is a
supersolution of~(\ref{v:u})--(\ref{v:n}). By
Theorem~\ref{p:theorem:comp-prins} for
$\varepsilon_1\leq\varepsilon_2$ we have $u_{\varepsilon_1}\leq
u_{\varepsilon_2}$. According to the Dini theorem
(see~\cite{Bartle_Sherbert}) for some $T>0$ the sequence
$\{u_\varepsilon(x,t)\}$ converges as $\varepsilon\to0$ uniformly
in $\overline Q_T$ to some function $u(x,t)$. Passing to the limit
as $\varepsilon\to0$ in~(\ref{le:equat}) and using dominated
convergence theorem we obtain that the function $u(x,t)$ satisfies
in $Q_T$ the following equation
\begin{eqnarray*}
        u(x,t)&=&\int_\Omega{G_N(x,y;t)u_{0}(y)}\,dy+\int_0^t{\int_\Omega{G_N(x,y;t-\tau)c(y,\tau)u^p(y,\tau)}\,dy}\,d\tau\\
        &+&\int_0^t{\int_{\partial\Omega}{G_N(x,\xi;t-\tau)\int_{\Omega}{k(\xi,y,\tau)u^l(y,\tau)}\,dy}}\,dS_\xi\,d\tau.
\end{eqnarray*}
Hence, $u(x,t)$ solves problem~(\ref{v:u})--(\ref{v:n}) in $Q_T$. It is easy to
prove that $u(x,t)$ is maximal solution
of~(\ref{v:u})--(\ref{v:n}) in $Q_T$.
\end{proof}
\section{Uniqueness and nonuniqueness}\label{uniq}

In this section we shall use some arguments of~\cite{Escobedo_Herrero} and~\cite{Gladkov_Kim1}.
\begin{theorem}\label{uniq:theorem:max-positiv}
      Let $u_0(x)\equiv 0$ and $u(x,t)$ be maximal solution of~(\ref{v:u})--(\ref{v:n}) in $Q_T$. Suppose that for some $t_0\in[0,T)$
      at least one from the following conditions is fulfilled:
       \begin{equation}\label{uniq:c}
       c(x_0,t_0)>0 \textrm{ for some } x_0\in\Omega \, \textrm{ and } \, 0<p<1
       \end{equation}
or
    \begin{equation}\label{uniq:k}
    k(x,y_0,t_0)>0\textrm{ for any }x\in\partial\Omega\textrm{ and some }y_0\in\partial\Omega \, \textrm{ and } \, 0<l<1.
    \end{equation}
Then maximal solution $u(x,t)$ of problem~(\ref{v:u})--(\ref{v:n})
is nontrivial in $Q_T$.
\end{theorem}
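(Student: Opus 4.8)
The plan is to read the statement as a nonuniqueness result: under the hypotheses $u_0\equiv0$ the function $v\equiv0$ is itself a solution, so proving that the maximal solution is nontrivial amounts to exhibiting a second, nonzero, lower barrier. Concretely, I would reduce everything to the construction of a nonnegative classical subsolution $\underline u$ of~(\ref{v:u})--(\ref{v:n}) with $\underline u(x,0)=0$ that is not identically zero. Once such a $\underline u$ is in hand, recall from Theorem~\ref{p:max_eq} that the maximal solution is the limit $u=\lim_{\varepsilon\to0}u_\varepsilon$ of the approximating solutions, each of which is a supersolution of~(\ref{v:u})--(\ref{v:n}) with $u_\varepsilon\ge\varepsilon>0$ and $u_\varepsilon(x,0)=u_{0\varepsilon}(x)\ge0=\underline u(x,0)$. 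Since $\min(p,l)<1$ in both cases but $u_\varepsilon$ is strictly positive, Theorem~\ref{p:theorem:comp-prins} applies and gives $\underline u\le u_\varepsilon$ for every $\varepsilon$; letting $\varepsilon\to0$ yields $u\ge\underline u\not\equiv0$. Thus the whole problem is to build $\underline u$ in each of the two cases.

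\emph{Case~(\ref{uniq:c}): interior source.} By continuity there are $c_0>0$, a ball $U\Subset\Omega$ around $x_0$, and an interval $[t_1,t_2]\subset[0,T)$ with $t_0\in(t_1,t_2)$ (or $t_1=0$ if $t_0=0$) on which $c(x,t)\ge c_0$. I would take the separated form $\underline u(x,t)=\eta(t)\phi(x)$, where $\phi\in C_0^\infty(U)$ is radial with $0\le\phi\le1$ and $\phi(x_0)=1$, and $\eta$ is the self-starting solution of $\eta'=(1-\sigma)c_0\eta^p$ with $\eta(t_1)=0$ (nontrivial precisely because $p<1$, and of class $C^1$ at $t_1$ since $1/(1-p)>1$), smoothly cut off so as to stay frozen for $t$ near $t_2$. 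Because $\operatorname{supp}\phi\Subset\Omega$, the boundary inequality is automatic: $\partial_\nu\underline u=0\le\int_\Omega k\underline u^l\,dy$. After dividing by $\eta^p$, the interior inequality~(\ref{v:sup^u}) reduces to $-(c\phi^p-(1-\sigma)c_0\phi)\le\eta^{1-p}\Delta\phi$; using $\phi^p\ge\phi$ on $[0,1]$ and the fact that, for a radial single-bump $\phi$, the set $\{\Delta\phi<0\}$ is contained in $\{\phi\ge\phi_0>0\}$, this holds once $\eta^{1-p}\sup|\Delta\phi|$ is small, i.e. on a short enough time interval. This $\underline u$ is positive near $x_0$, hence nontrivial.

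\emph{Case~(\ref{uniq:k}): boundary flux.} The key conceptual point — and the main obstacle — is that here $c$ may vanish, so the growth cannot be produced in the interior: for a subsolution with no source the maximum principle forbids the spatial maximum from rising off zero, so the maximum must sit on $\partial\Omega$ and be fed through the sublinear flux ($l<1$). Accordingly I would discard the separated ansatz and build a self-similar boundary layer along the \emph{whole} boundary, $\underline u(x,t)=A\,(t-t_1)_+^{\gamma}\,f\!\big(d(x)/\sqrt{t-t_1}\big)$ with $d(x)=\operatorname{dist}(x,\partial\Omega)$, exponent $\gamma=1/(1-l)$, small amplitude $A>0$, and $f\ge0$ a smooth, decreasing, compactly supported profile. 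Writing $\Delta\underline u=\underline u_{dd}-\underline u_d\,\Delta d$, the curvature contribution $\underline u_d\,\Delta d$ is of lower order in $t$, so the interior inequality reduces to the profile inequality $\gamma f-\tfrac{\zeta}{2}f'-f''\le0$ (with $\zeta=d/\sqrt{t-t_1}$) up to a perturbation that is controlled for small $t-t_1$ and small $A$. Wrapping the layer around all of $\partial\Omega$ avoids any tangential cutoff, and this is exactly where hypothesis~(\ref{uniq:k}) is used: since $k(x,y_0,t_0)>0$ for every $x\in\partial\Omega$, continuity gives $k\ge k_0>0$ near $y_0$ uniformly in $x\in\partial\Omega$, whence $\int_\Omega k\,\underline u^l\,dy\ge c_2\,(t-t_1)^{\gamma l+1/2}$ uniformly on $\partial\Omega$. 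The boundary inequality~(\ref{v:sup^g}), $-\underline u_d(0,t)\le\int_\Omega k\underline u^l\,dy$, then matches powers exactly (both exponents equal $\tfrac{1+l}{2(1-l)}$ for $\gamma=1/(1-l)$) and reduces to $A^{1-l}\le\text{const}$, so it holds for $A$ small. Since $\gamma>1$, one checks $\underline u(x,0)=0$ and $\underline u_t,\nabla\underline u\to0$ as $t\to t_1^+$, giving the required regularity, and $\underline u$ is positive in the layer, hence nontrivial.

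The routine parts are the verification of the two differential inequalities and of the regularity at the starting time; the substantive difficulty is entirely in Case~(\ref{uniq:k}): choosing the similarity exponent $\gamma=1/(1-l)$ so that the boundary flux is self-consistent, producing the uniform lower bound on $\int_\Omega k\underline u^l\,dy$ over all of $\partial\Omega$, and absorbing the curvature and profile-edge terms. With $\underline u$ constructed, the conclusion $u\ge\underline u\not\equiv0$ follows from the comparison argument of the first paragraph.
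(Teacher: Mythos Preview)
Your proposal is correct and follows essentially the same strategy as the paper: in both cases one constructs a self-starting nonnegative subsolution (possible precisely because the relevant exponent is sublinear) and then compares it with the positive approximants $u_\varepsilon$, passing to the limit. The details differ only cosmetically --- the paper uses the factor $C(t-t_0)^{1/(1-p)}w(x,t)$ with $w$ solving the heat equation on a subdomain and a Dirichlet comparison in Case~(\ref{uniq:c}), and the explicit cubic profile $(t-t_0)^\alpha(\xi_0-s/\sqrt{t-t_0})_+^3$ with $\alpha>1/(1-l)$ in Case~(\ref{uniq:k}), whereas you take a fixed bump with the full Neumann comparison and the critical exponent $\gamma=1/(1-l)$ balanced by a small amplitude $A$ --- but the architecture of the argument is identical.
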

\begin{proof}
At first we suppose that (\ref{uniq:c}) is true. By the continuity
of the function $c(x,t)$ there exist a neighborhood
$U(x_0)\subset\Omega$ of $x_0$ in $\Omega$ and a constant $T_1<T$
such that $c(x,t)\geq c_0>0$ for $x\in U(x_0)$ and
$t\in[t_0,T_1]$. We introduce the auxiliary problem
    \begin{eqnarray}\label{uniq:2}
    \left\{ \begin{array}{ll}
    u_{t}=\Delta u+c(x,t)u^p,\;x \in U(x_0),\;t_0<t<T_1,\\
    u(x,t)=0 ,\;x\in\partial U(x_0),\;t_0<t<T_1,\label{uniq:additional_system}\\
    u(x,t_0)=0,\;x\in U(x_0).
    \end{array} \right.
    \end{eqnarray}
    We shall construct a subsolution of problem~(\ref{uniq:additional_system}).
    Put $\underline{u}(x,t)=C(t-t_0)^{\frac{1}{1-p}}w(x,t)$, where $C$ is some positive constant and $w(x,t)$ is a solution of the following problem
    \begin{eqnarray}\label{unic:vsp_w}
    \left\{\begin{array}{ll}w_{t}=\Delta w,\;x\in U(x_0),\;t_0<t<T_1,\\
    w(x,t)=0,\;x\in\partial U(x_0),\;t_0<t<T_1,\\
    w(x,t_0)=w_0(x),\;x\in U(x_0).
    \end{array}\right.
    \end{eqnarray}
Here $w_0(x)$ is nontrivial nonnegative continuous function in
$\overline {U(x_0)}$ which satisfies boundary condition. We note
that $\underline{u} (x,t)=0$ if $t=t_0$ or $x\in\partial U(x_0)$.
By the strong maximum principle we get $0<w(x,t)< M_0 = \sup_{x
\in
    U(x_0)}w_0(x)$ for $x \in U(x_0)$ and $t_0<t<T_1$.

For all $(x,t)\in U(x_0)\times(t_0,T_1)$ we have
    $$
    \underline{u}_t-\Delta \underline{u}-c(x,t)\underline{u}^p=\frac{C}{1-p}(t-t_0)^\frac{p}{1-p}w-c(x,t)C^p(t-t_0)^\frac{p}{1-p}w^p \leq
    0,
    $$
where $C\leq M_0^{-1}[c_0(1-p)]^{1/(1-p)}$.

Let $u(x,t)$ be maximal solution of~(\ref{v:u})--(\ref{v:n}) in
$Q_T$ with trivial initial datum. According to
Theorem~\ref{p:max_eq}
$u(x,t)=\lim\limits_{\varepsilon\to0}u_\varepsilon(x,t)$, where
$u_\varepsilon(x,t)$ is positive supersolution
of~(\ref{v:u})--(\ref{v:n}) in $Q_T$.  It is easy to see that
$u_\varepsilon (x,t)$ is a supersolution
of~(\ref{uniq:additional_system}). By the comparison principle for
problem~(\ref{uniq:2}) we have
$u_\varepsilon(x,t)\geq\underline{u}(x,t)$ for $(x,t)\in\overline
{U(x_0)}\times[t_0,T_1)$. Passing to the limit as
$\varepsilon\to0$ we get $u(x,t)\geq\underline{u}(x,t)$ for
$(x,t)\in\overline {U(x_0)}\times[t_0,T_1)$. By~(\ref{v:g}) and
the strong maximum principle we obtain that maximal solution
$u(x,t)>0$ for all $x\in\overline\Omega$ and $t_0<t<T_1$.

Now we suppose that~(\ref{uniq:k}) is realized. Then there exist a
neighborhood $V(y_0)\subset\overline\Omega$ of $y_0$ and a
constant $T_2\in(t_0,T)$ such that $k(x,y,t)>0$ for $t_0\leq t\leq
T_2,\;x\in\partial\Omega$ and $y\in V(y_0)$.

We use the change of variables in a neighborhood of $\partial
\Omega$ as in~\cite{CPE}. Let $\overline x$ be a point in
$\partial \Omega$ and $\widehat{n}(\overline x)$ be the unit inner
normal to $\partial \Omega$ at the point $\overline x$. Since
$\partial \Omega$ is smooth it is well known that there exists
$\delta >0$ such that the mapping
$\psi:\partial\Omega\times[0,\delta]\to \mathbb{R}^n$ given by
$\psi(\overline x,s)=\overline x +s\widehat{n}(\overline x)$
defines new coordinates $(\overline x,s)$ in a neighborhood of
$\partial\Omega$ in $\overline\Omega$. A straightforward
computation shows that, in these coordinates, $\Delta$ applied to
a function $g(\overline x,s)=g(s)$, which is independent of the
variable $\overline x$, evaluated at a point $(\overline x,s)$ is
given by
    \begin{equation}\label{uniq:new-coord}
        \Delta g(\overline x,s)=\frac{\partial^2g}{\partial s^2}(\overline x,s)-\sum_{j=1}^{n-1}\frac{H_j(\overline x)}{1-s
        H_j (\overline x)}\frac{\partial g}{\partial s}(\overline x,s),
    \end{equation}
where $H_j(\overline x)$ for $j=1,\dots,n-1,$ denote the principal
curvatures of $\partial\Omega$ at $\overline x$.

Let $\alpha>1/(1-l),\;0<\xi_0\leq1$ and
$t_0<T_3\leq\min(T_2,t_0+\delta^2)$.
    For points in $Q_{\delta,T_3}=\partial\Omega\times[0,\delta]\times(t_0,T_3)$ of coordinates $(\overline x,s,t)$ we define
    \begin{equation*}
        \underline{u}(\overline x,s,t)=(t-t_0)^\alpha\left(\xi_0-\frac{s}{\sqrt{t-t_0}}\right)_+^3,
    \end{equation*}
and for points in $\overline\Omega\times[t_0,T_3)\setminus
Q_{\delta,T_3}$ we put $\underline{u}(\overline x,s,t)\equiv0$. We
shall prove that $\underline{u}(\overline x,s,t)$ is subsolution
of~(\ref{v:u})--(\ref{v:n}) in $\Omega\times(t_0,T_3)$. Indeed,
using~(\ref{uniq:new-coord}), we get
    \begin{eqnarray*}
&&\hspace{-0.6cm}\underline{u}_t (\overline x,s,t)-\Delta
\underline{u}(\overline x,s,t)-c(x,t)\underline{u}^p(\overline
x,s,t)=\alpha(t-t_0)^{\alpha-1}\left(\xi_0-\frac{s}{\sqrt{t-t_0}}\right)^3_+\\
&+&\frac{3}{2}s(t-t_0)^{\alpha-3/2}\left(\xi_0-\frac{s}{\sqrt{t-t_0}}\right)^2_+-6(t-t_0)^{\alpha-1} \left(\xi_0-\frac{s}{\sqrt{t-t_0}}\right)_+\\
&-&3(t-t_0)^{\alpha-1/2}\left(\xi_0-\frac{s}{\sqrt{t-t_0}}\right)^2_+\sum_{j=1}^{n-1}
\frac{H_j(\overline x)}{(1-s H_j (\overline
x))}-c(x,t)\underline{u}^p(\overline x,s,t)\leq 0
    \end{eqnarray*}
for sufficiently small values of $\xi_0$.

It is obvious,
    \begin{equation*}
        \frac{\partial\underline u}{\partial\nu}(\overline x,0,t)=-\frac{\partial\underline u}{\partial s}(\overline x,0,t)=3(t-t_0)^{\alpha-\frac{1}{2}}\xi_0^2.
    \end{equation*}
    For sufficiently small values of $t-t_0$ we get
    \begin{eqnarray*}
    \begin{array}{ll}
        \frac{\partial\underline u}{\partial\nu}(x,t)-\int_{\Omega}k(x,y,t)\underline u^l(y,t)\,dy = 3(t-t_0)^{\alpha-\frac{1}{2}}\xi_0^2\\
        -(t-t_0)^{\alpha l}\int_{\partial\Omega\times[0,\delta]}k(x,(\overline{y},s),t)|J(\overline y,s)|\left(\xi_0-\frac{s}{\sqrt{t-t_0}}\right)^{3l}_+
        \,d\overline y\,ds \leq 3(t-t_0)^{\alpha-\frac{1}{2}}\xi_0^2\\
        -(t-t_0)^{\alpha l+\frac{1}{2}} \int_{\partial\Omega}\,d\overline y\,\int_0^{\xi_0} k(x,(\overline{y},z\sqrt{t-t_0}),t)
        |J(\overline y,z\sqrt{t-t_0})|\left(\xi_0-z \right)^{3l}_+ dz\\
        \leq 3(t-t_0)^{\alpha-\frac{1}{2}}\xi_0^2-C(t-t_0)^{\alpha l+\frac{1}{2}}\leq0,
     \end{array}
    \end{eqnarray*}
    where $J(\overline y,s)$ is Jacobian of the change of variables, and the constant $C$ does not depend on $t$. Completion of the proof is the same as in the first part of the theorem.
\end{proof}
  Suppose that
    \begin{equation}\label{unic:c2}
        c(x,t)\not\equiv 0 \textrm{ in } Q_\tau \textrm{ for any } \tau >0 \, \textrm{ and } \, 0<p<1
    \end{equation}
    and there exist sequences $\{t_k\}$ and $\{y_k\}$, $k\in N$, such that
    \begin{equation}\label{unic:c222}
    t_k>0, \lim_{k \to \infty} t_k = 0,\;y_k\in\partial\Omega,\;
    k(x,y_k,t_k)>0\textrm{ for any }x \in\partial\Omega \, \textrm{ and } \,
    0<l<1.
    \end{equation}
\begin{remark}\label{posit}
    Let the assumptions of Theorem~\ref{uniq:theorem:max-positiv} hold but only at least one condition (\ref{unic:c2}) or (\ref{unic:c222}) is fulfilled instead of (\ref{uniq:c}), (\ref{uniq:k}).
    Then maximal solution of~(\ref{v:u})--(\ref{v:n}) is positive in $Q_T\cup S_T$.
\end{remark}
\begin{corollary}\label{uniq:conseq:uniq}
   Let the assumptions of Theorem~\ref{uniq:theorem:max-positiv}  hold but only at least one condition (\ref{unic:c2}) or (\ref{unic:c222}) is fulfilled instead of (\ref{uniq:c}), (\ref{uniq:k}). Suppose that there exists $\overline t\in(0,T)$  such that
\begin{equation}\label{uniq:decrease}
   c(x,t)\textrm{ and }k(x,y,t)\textrm{ are nondecreasing with respect to } t \in[0,\overline t].
\end{equation}
Then there exists exactly one solution of~(\ref{v:u})--(\ref{v:n})
which is positive in $Q_T\cup S_T.$
\end{corollary}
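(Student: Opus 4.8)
The plan is to separate the statement into an existence half and a uniqueness half, the latter restricted to positive solutions. Note first that since $u_0\equiv0$ the function $u\equiv0$ is always a solution of~(\ref{v:u})--(\ref{v:n}), so genuine uniqueness cannot hold and the assertion must be read as: there is a positive solution, and it is the only positive one. The existence of a positive solution is already supplied by Remark~\ref{posit}, where the maximal solution is shown to be positive in $Q_T\cup S_T$ under~(\ref{unic:c2}) or~(\ref{unic:c222}). It therefore remains to prove that any two solutions $u$ and $v$ that are positive in $Q_T\cup S_T$ must coincide.

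For the uniqueness half the obstacle is immediate: the comparison principle of Theorem~\ref{p:theorem:comp-prins} requires one of the compared functions to be positive on all of $Q_T\cup\Gamma_T$ when $\min(p,l)<1$, yet every solution here vanishes at $t=0$, so positivity fails precisely on $\overline\Omega\times\{0\}$. This is exactly where the monotonicity hypothesis~(\ref{uniq:decrease}) is used. For small $h>0$ I would introduce the time-shifted function $u_h(x,t):=u(x,t+h)$ on $Q_{\bar t-h}$ and check that it is a supersolution of~(\ref{v:u})--(\ref{v:n}): differentiating the equation for $u$ gives $(u_h)_t=\Delta u_h+c(x,t+h)u_h^p\geq\Delta u_h+c(x,t)u_h^p$ since $c$ is nondecreasing in $t$ on $[0,\bar t]$ and $u_h^p\geq0$, so~(\ref{v:sup^u}) holds; likewise $\partial u_h/\partial\nu=\int_\Omega k(x,y,t+h)u_h^l\,dy\geq\int_\Omega k(x,y,t)u_h^l\,dy$ by the monotonicity of $k$, so~(\ref{v:sup^g}) holds. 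The shift also restores positivity: $u_h$ is positive throughout $Q_{\bar t-h}\cup\Gamma_{\bar t-h}$, and in particular $u_h(x,0)=u(x,h)>0=v(x,0)$, so the initial data are ordered and the positivity requirement of Theorem~\ref{p:theorem:comp-prins} is met.

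Applying the comparison principle with supersolution $u_h$ and subsolution $v$ then yields $v(x,t)\leq u(x,t+h)$ for $(x,t)\in Q_{\bar t-h}\cup\Gamma_{\bar t-h}$. Letting $h\to0^+$ and using the continuity of $u$ gives $v\leq u$ on $[0,\bar t]$, and exchanging the roles of $u$ and $v$ gives the reverse inequality, so $u\equiv v$ on $\Omega\times[0,\bar t]$. It remains to propagate the equality to the whole cylinder. At $t=\bar t$ we now have $u(\cdot,\bar t)=v(\cdot,\bar t)>0$, so over the later range $[\bar t,T]$ both functions are positive solutions sharing the same strictly positive datum at $t=\bar t$; positivity then holds on the entire parabolic boundary of this shifted cylinder, and Theorem~\ref{p:theorem:comp-prins} (in the packaged form of Theorem~\ref{p:conseq:uniq}) applies directly in both directions to force $u\equiv v$ on $[\bar t,T]$. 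Combining the two ranges gives $u\equiv v$ on $Q_T\cup\Gamma_T$, so the positive solution is unique.

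The main obstacle throughout is the degeneracy at $t=0$ produced by the vanishing initial datum together with the non-Lipschitz nonlinearity in the regime $\min(p,l)<1$; without the positivity hypothesis the comparison principle simply does not apply on $\Gamma_T$. The monotonicity~(\ref{uniq:decrease}) is precisely the device that turns a genuine solution into a strict supersolution after an arbitrarily small time shift, and it is the passage $h\to0^+$ that must be justified by continuity; this is the step I expect to require the most care, since the comparison inequality is obtained only on the shrinking cylinder $Q_{\bar t-h}$ before the limit is taken.
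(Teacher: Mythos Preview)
Your proof is correct and follows the same strategy as the paper: use the monotonicity hypothesis~(\ref{uniq:decrease}) to make a small time-shift of one positive solution into a strictly positive supersolution, apply the comparison principle (Theorem~\ref{p:theorem:comp-prins}) on $[0,\bar t-h]$, let $h\to0$, and then propagate equality past $\bar t$ using uniqueness with positive initial data (Theorem~\ref{p:conseq:uniq}). The only cosmetic difference is that the paper fixes $u$ to be the maximal solution and shifts the other solution $v$, so one inequality ($v\le u$) is free from Definition~\ref{v:max1} and the shift argument is needed only once; you instead treat $u$ and $v$ symmetrically and run the shift argument in both directions, which is equally valid since both solutions are assumed positive.
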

\begin{proof}
Denote $u(x,t)$ maximal solution of~(\ref{v:u})--(\ref{v:n})
with $u_0(x)\equiv0$. Due to Remark~\ref{posit} $u(x,t) >0$ in
$Q_T\cup S_T.$ Suppose, for a contradiction, that there exists
another solution $v(x,t)$ of~(\ref{v:u})--(\ref{v:n}) with trivial
initial datum which is positive in $Q_T\cup S_T.$ By virtue
of~(\ref{uniq:decrease}) $v(x,t+\tau)$ is positive supersolution
of~(\ref{v:u})--(\ref{v:n}) in $Q_{\overline t-\tau}$ for any
$\tau\in(0,\overline t)$. By Theorem~\ref{p:theorem:comp-prins}
then we get $u(x,t)\leq v(x,t+\tau)$ for $(x,t)\in Q_{\overline
t-\tau}\cup\Gamma_{\overline t-\tau}$. Passing to the limit as
$\tau\to0$ we have $u(x,t)\leq v(x,t)$ for $(x,t)\in Q_{\overline
t}\cup\Gamma_{\overline t}$. By Definition~\ref{v:max1} and
Theorem~\ref{p:conseq:uniq} we obtain $v(x,t)=u(x,t)$ for all
$t\in(0,T)$.
\end{proof}
\begin{theorem}\label{uniq not triv}
Let $\min(p,l)<1,\,$ $u_0\not\equiv 0$  and (\ref{uniq:decrease})
be satisfied.
Suppose that~(\ref{unic:c2}) or~(\ref{unic:c222}) hold.
   Then the solution of~(\ref{v:u})--(\ref{v:n}) is unique.
\end{theorem}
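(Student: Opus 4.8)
The plan is to prove that the maximal solution furnished by Theorem~\ref{p:max_eq} is the only solution, and to do so by adapting the time--translation device of Corollary~\ref{uniq:conseq:uniq} to a nontrivial datum. Write $u$ for the maximal solution of~(\ref{v:u})--(\ref{v:n}) with datum $u_0$ and let $v$ be any solution; by Definition~\ref{v:max1} we have $v\le u$, and by Theorem~\ref{p:theorem:positive} both $u$ and $v$ are strictly positive in $Q_T\cup S_T$. It therefore suffices to establish the reverse inequality $u\le v$. Having it, $u=v$ on $Q_{\overline t}\cup\Gamma_{\overline t}$; since then $u(\cdot,\overline t)=v(\cdot,\overline t)>0$, the positive--datum uniqueness of Theorem~\ref{p:conseq:uniq}, applied on $\Omega\times(\overline t,T)$, propagates the equality to all of $Q_T$.

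To obtain $u\le v$ I would, for $\tau\in(0,\overline t)$, introduce the translate $v_\tau(x,t)=v(x,t+\tau)$. By the monotonicity hypothesis~(\ref{uniq:decrease}) on $c$ and $k$, the computation in Corollary~\ref{uniq:conseq:uniq} shows that $v_\tau$ is a supersolution of~(\ref{v:u})--(\ref{v:g}) on $Q_{\overline t-\tau}$; crucially, because $v(\cdot,s)>0$ for every $s>0$, the function $v_\tau$ is strictly positive on the whole closed cylinder $\overline Q_{\overline t-\tau}$, including the initial slice $t=0$. Consequently, in the mean--value identity~(\ref{p:vsp3}) written for the pair $(u,v_\tau)$ the intermediate functions $\theta_1,\theta_2$ are pinched between $u\ge0$ and $v_\tau>0$, so they are strictly positive and continuous on the compact set $\overline Q_{\overline t-\tau}$, hence bounded away from $0$. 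This is exactly the situation in which Theorem~\ref{p:theorem:comp-prins} was proved, the positivity being supplied by $v_\tau$ rather than by the (possibly vanishing) initial datum of $u$.

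The one place where the hypotheses of Theorem~\ref{p:theorem:comp-prins} are not literally met is the ordering of initial values: comparing the subsolution $u$ with the supersolution $v_\tau$ asks for $u(\cdot,0)=u_0\le v(\cdot,\tau)=v_\tau(\cdot,0)$, and while this is automatic on the zero set $\{u_0=0\}$, on $\{u_0>0\}$ diffusion may make $v(\cdot,\tau)<u_0$ for a short time. I would circumvent this by using the \emph{integrated} form of the argument rather than its pointwise conclusion. Tracking the initial datum through~(\ref{p:vsp3})--(\ref{p:vsp4}) for $w=u-v_\tau$, the only term it contributes is $\int_\Omega w_+(x,0)\,dx=\int_\Omega\bigl(u_0-v(\cdot,\tau)\bigr)_+\,dx$, which is supported in $\{u_0>0\}$ and, by the continuity of $v$ up to $t=0$, tends to $0$ as $\tau\to0$. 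The Gronwall step then yields $\int_\Omega w_+(x,t)\,dx\le e^{m(\tau)t}\int_\Omega(u_0-v(\cdot,\tau))_+\,dx$, and letting $\tau\to0$ (using $w_+(x,t)\to(u-v)_+(x,t)$) would give $\int_\Omega(u-v)_+(x,t)\,dx\le0$, i.e. $u\le v$.

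I expect the genuine difficulty to lie in controlling the Gronwall constant $m(\tau)$ uniformly as $\tau\to0$. This constant is built from $\sup\theta_2^{l-1}$ on the boundary together with the adjoint test function $\varphi$, whose potential is $p\theta_1^{p-1}c$; both degenerate precisely where $u_0$ vanishes, since there $v(\cdot,\tau)\to0$ and hence $\theta_1,\theta_2\to0$ as $\tau\to0$. When $u_0>0$ on $\partial\Omega$ these quantities stay bounded and the limit goes through at once; the delicate case is $u_0$ vanishing on $\partial\Omega$, where $m(\tau)$ may blow up and its growth must be balanced against the decay of the initial term. This is the step at which the standing assumptions~(\ref{uniq:decrease}) and~(\ref{unic:c2}) or~(\ref{unic:c222}), through the strict positivity guaranteed for $t>0$ by Theorem~\ref{p:theorem:positive} and Remark~\ref{posit}, must be used quantitatively -- for instance by first comparing on $\Omega\times(\sigma,\overline t-\tau)$ with $\sigma>0$ fixed, where the coefficients are uniformly nondegenerate, and only afterwards sending $\sigma\to0$ with the matching datum $u_0$ supplied at $t=0$.
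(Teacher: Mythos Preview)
Your approach has a genuine gap that you yourself flag but do not close: the Gronwall constant $m(\tau)$ need not stay bounded as $\tau\to0$. The interesting case of the theorem is precisely when $u_0$ vanishes on part of $\overline\Omega$ (if $u_0>0$ everywhere, Theorem~\ref{p:conseq:uniq} already gives uniqueness and none of this machinery is needed). On the set $\{u_0=0\}$ the intermediate values $\theta_1,\theta_2$ in~(\ref{p:vsp3}) lie between $u$ and $v_\tau$, both of which tend to $0$ there as $\tau\to0$; since $\min(p,l)<1$, the factors $\theta_1^{p-1}$ or $\theta_2^{l-1}$ then blow up, and nothing in your argument gives a rate at which the initial defect $\int_\Omega(u_0-v(\cdot,\tau))_+\,dx$ decays fast enough to beat $e^{m(\tau)t}$. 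Your fallback of first working on $\Omega\times(\sigma,\overline t-\tau)$ is circular: to start the comparison at level $t=\sigma$ you would need $u(\cdot,\sigma)\le v(\cdot,\sigma+\tau)$, which is essentially the conclusion you are after. So the limit $\tau\to0$ cannot be justified, and the proof is incomplete in exactly the non-Lipschitz regime the theorem is meant to cover.

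The paper takes a different route that sidesteps this degeneracy. With $z=u-v\ge0$, subadditivity of $s\mapsto s^q$ for $0<q\le1$ makes $z$ a \emph{subsolution} of the same problem with zero initial datum. Corollary~\ref{uniq:conseq:uniq} supplies the unique positive solution $h$ of that zero-datum problem, and one has $z\le h\le u$. The key idea is that $a=h-z=h-u+v$ is a \emph{supersolution} of the zero-datum problem, thanks to the elementary inequality $h^q-u^q+v^q\ge(h-u+v)^q$ valid for $0<q\le1$ when $\max(h,v)\le u\le h+v$. One then shows $a>0$ in $Q_{T_3}$: otherwise $a\equiv0$ on an initial time interval, forcing an equality of the type $(h+v)^q=h^q+v^q$ that contradicts~(\ref{unic:c2}) or~(\ref{unic:c222}). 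Since $a$ is now a \emph{positive} supersolution with zero initial datum, the time-translation device of Corollary~\ref{uniq:conseq:uniq} applies legitimately and yields $a\ge h$, i.e.\ $z\le0$. The introduction of the auxiliary $h$ together with this algebraic inequality is the missing ingredient in your proposal.
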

\begin{proof}

In order to prove uniqueness we show that if  $v$ is any solution
of~(\ref{v:u})--(\ref{v:n}) then
\begin{equation}\label{uniq:1}
    u\leq v\textrm{ in }Q_{T_1},
\end{equation}
where $u$ is maximal solution of~(\ref{v:u})--(\ref{v:n}).

We shall consider three cases: $0<l<1$ and $0<p\leq 1$, $0<l<1$
and $p>1$, $0<p<1$ and $l \geq 1.$

Let $0<l<1,\;0<p\leq 1$.
Put $z=u-v$. Then $z$ satisfies the problem
\begin{equation}\label{uniq:z}
    \begin{cases}
        z_{t}\leq\Delta z + c(x,t)z^p,\;(x,t)\in Q_{T_1},\\
        \frac{\partial z(x,t)}{\partial\nu}\leq\int_{\Omega}k(x,y,t)z^l(y,t)\,dy,\;(x,t)\in S_{T_1},\\
        z(x,0) \equiv 0,\;x\in\Omega.
    \end{cases}
\end{equation}
By Corollary~\ref{uniq:conseq:uniq} there exists unique solution $h(x,t)$ of the following problem
\begin{equation*}
    \begin{cases}
        h_{t}=\Delta h + c(x,t)h^p,\;(x,t)\in Q_{T_2},\\
        \frac{\partial h(x,t)}{\partial\nu}=\int_{\Omega}k(x,y,t)h^l(y,t)\,dy,\;(x,t)\in S_{T_2},\\
        h(x,0) \equiv 0,\;x\in\Omega,
    \end{cases}
\end{equation*}
such that $h(x,t)>0$ for $x \in \overline{\Omega}$ and $0<t<T_2$.
Let $T_3=\min(T_1,T_2)$. In a similar way as in
Corollary~\ref{uniq:conseq:uniq} and
Theorem~\ref{p:theorem:comp-prins} it can be shown that $h\geq z$
and $u\geq h$. Put $a=h-z$ and use the following inequality (see,
for example,~\cite{Aguirre_Escobedo})
\begin{equation*}
h^q-u^q+v^q \geq (h-u+v)^q,
\end{equation*}
where $0<q\leq1$ and $\max\{h,v\}\leq u\leq h+v$. Then we get
\begin{equation*}
    \begin{cases}
        a_{t}\geq\Delta a + c(x,t)a^p,\;(x,t)\in Q_{T_3},\\
        \frac{\partial a(x,t)}{\partial\nu}\geq\int_{\Omega}k(x,y,t)a^l(y,t)\,dy,\;(x,t)\in S_{T_3},\\
        a(x,0) \equiv 0,\;x\in\Omega.
    \end{cases}
\end{equation*}
We claim that $a>0$ in $Q_{T_3}$. Indeed, otherwise by
Theorem~\ref{p:theorem:positive} there exists
$\bar{t}\in(0,{T_3})$ such that $a(x,t)\equiv 0$ for $(x,t)\in
Q_{\bar{t}}$.  Then we obtain
\begin{eqnarray*}
&&\hspace{-0.7cm}\int_{\Omega}k(x,y,t)(h^l(y,t)+v^l(y,t))\,dy=\frac{\partial h(x,t)}{\partial\nu}+\frac{\partial v(x,t)}{\partial\nu}=\frac{\partial z(x,t)}{\partial\nu}+\frac{\partial v(x,t)}{\partial\nu}\\
&=&\frac{\partial u(x,t)}{\partial\nu}=\int_{\Omega}k(x,y,t)u^l (y,t)\,dy=\int_{\Omega}k(x,y,t)(z(y,t)+v(y,t))^l\,dy\\
&=&\int_{\Omega}k(x,y,t)(h(y,t)+v(y,t))^l\,dy
\end{eqnarray*}
for all $x\in\partial\Omega$ and $0<t<\bar{t}$. This is a
contradiction, if (\ref{unic:c222}) is  satisfied, since
$h>0,\;v>0$ in $Q_{\bar{t}}$, $k(x,y_k,t_k)>0$ for all
$x\in\partial\Omega$ and some $y_k\in\partial\Omega$,
$0<t_k<\bar{t}$ and $0<l<1$.

If (\ref{unic:c2}) is fulfilled, we can get a contradiction in
another way. Really,
\begin{eqnarray*}
&&\hspace{-0.7cm}c(x,t)(h+v)^p=c(x,t)(z+v)^p=c(x,t)u^p=u_{t}-\Delta u=(z+v)_t-\Delta(z+v)\\
&=&(h+v)_t-\Delta(h+v)=c(x,t)(h^p+v^p)
\end{eqnarray*}
for all $x\in\Omega$ and $0<t<\bar{t}$. This is a contradiction
since $h>0,\;v>0$ in $Q_{\bar{t}}$, $c(x_1,t_1)>0$ for some $x_1
\in \Omega, \,$ $t_1 \in (0,\bar{t}),$ and $0<p<1.$

Since $a>0$ in $Q_{\bar{t}}$ by Corollary~\ref{uniq:conseq:uniq}
and Theorem~\ref{p:theorem:comp-prins} we conclude that $a\geq h$
in $Q_{\bar{t}}\cup\Gamma_{\bar{t}}.$ This implies~(\ref{uniq:1})
for the case $0<l<1$ and $0<p\leq 1$.

We consider the second case $0<l<1$ and $p>1$. It is easy to see that there exists a constant $M>0$ such that
\begin{equation*}
u^p(x,t)-v^p(x,t)\leq M(u(x,t)-v(x,t)),\;(x,t)\in Q_{T_4},
\end{equation*}
where $T_4<T_2$. Put $z=u-v$. Then the function $z(x,t)$ satisfies
problem~(\ref{uniq:z}) with $p=1$, and  $Mc(x,t)$ instead of
$c(x,t)$. Further the proof is the same as in the first case with
$p = 1$.

In the third case $0<p<1$ and $l \geq 1$  either $0<p<1$ and
$c(x,t) \equiv 0$ in $Q_\sigma$ for some $\sigma >0$ or
(\ref{unic:c2}) is true. If (\ref{unic:c2}) is fulfilled, we can
argue as in previous cases, otherwise, the solution
of~(\ref{v:u})--(\ref{v:n}) is unique by
Theorem~\ref{p:theorem:positive} and Theorem~\ref{p:conseq:uniq}.
\end{proof}
\begin{remark}
As we can see from the proof of Theorem~\ref{uniq not triv}, the
solution of~(\ref{v:u})--(\ref{v:n}) is unique  if $u_0\not\equiv
0,\,$  (\ref{uniq:decrease}) hold and $k(x,y,t) \equiv 0$ in
$\partial\Omega \times Q_{\tau}$ for some $\tau >0.$
\end{remark}

\end{document}